\newtheorem{theorem}{Theorem}[section]
\newtheorem{conjecture}[theorem]{Conjecture}
\newtheorem{definition}[theorem]{Definition}
\title{This is the title}
\begin{document}
	\hrule\hrule\hrule\hrule\hrule
	\vspace{0.3cm}	
	\begin{center}
		{\bf{MODULAR DEUTSCH ENTROPIC UNCERTAINTY PRINCIPLE}}\\
		\vspace{0.3cm}
		\hrule\hrule\hrule\hrule\hrule
		\vspace{0.3cm}
		\textbf{K. MAHESH KRISHNA}\\
		School of Mathematics and Natural Sciences\\
	Chanakya University Global Campus\\
	Haraluru Village, Near Kempe Gowda International Airport (BIAL)\\
	Devanahalli Taluk, 	Bengaluru  Rural District\\
	Karnataka  562 110 India\\
	Email: kmaheshak@gmail.com\\
		
		Date: \today
	\end{center}

\hrule
\vspace{0.5cm}
\textbf{Abstract}: Khosravi, Drnov\v{s}ek  and Moslehian [\textit{Filomat, 2012}] derived Buzano inequality for Hilbert C*-modules. Using this inequality we derive Deutsch entropic uncertainty principle for Hilbert C*-modules over commutative unital C*-algebras.

\textbf{Keywords}:  Buzano inequality, Entropic uncertainty, Hilbert C*-module.

\textbf{Mathematics Subject Classification (2020)}: 46L08, 42C15, 46L05.
\vspace{0.5cm}
\hrule 

\section{Introduction}
Let $\mathcal{H}$ be a finite dimensional Hilbert space. Given an orthonormal basis  $\{\tau_j\}_{j=1}^n$ for $\mathcal{H}$, the \textbf{Shannon entropy}  at a point $h \in \mathcal{H}_\tau$ is defined as 
\begin{align}\label{D}
	S_\tau (h)\coloneqq - \sum_{j=1}^{n} \left|\left \langle h, \tau_j\right\rangle \right|^2\log \left|\left \langle h, \tau_j\right\rangle \right|^2,
\end{align}
where $\mathcal{H}_\tau\coloneqq \{h \in \mathcal{H}: \|h\|=1,  \langle h , \tau_j \rangle \neq 0, 1\leq j \leq n\}$ \cite{DEUTSCH}. In 1983, Deutsch derived following breakthrough entropic uncertainty principle for Shannon entropy  \cite{DEUTSCH}.
\begin{theorem}\cite{DEUTSCH} (\textbf{Deutsch Entropic Uncertainty Principle})  \label{DU}
	Let $\{\tau_j\}_{j=1}^n$,  $\{\omega_k\}_{k=1}^n$ be two orthonormal bases for a  finite dimensional Hilbert space $\mathcal{H}$. Then 
	\begin{align}\label{DEUTSCHUNC}
		 S_\tau (h)+S_\omega (h)\geq -2 \log \left(\frac{1+\displaystyle \max_{1\leq j, k \leq n}|\langle\tau_j , \omega_k\rangle|}{2}\right), \quad \forall h \in \mathcal{H}_\tau\cap \mathcal{H}_\omega.
	\end{align}
\end{theorem}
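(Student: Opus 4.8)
The plan is to follow the classical route via the Buzano inequality, exactly as the abstract suggests. Fix $h \in \mathcal{H}_\tau \cap \mathcal{H}_\omega$ and write $a_j \coloneqq \langle h, \tau_j\rangle$, $b_k \coloneqq \langle h, \omega_k\rangle$, and $c \coloneqq \max_{1 \le j, k \le n}|\langle \tau_j, \omega_k\rangle|$. Since $\{\tau_j\}_{j=1}^n$ and $\{\omega_k\}_{k=1}^n$ are orthonormal bases and $\|h\| = 1$, Parseval's identity gives $\sum_{j=1}^n |a_j|^2 = \sum_{k=1}^n |b_k|^2 = 1$; moreover $h \in \mathcal{H}_\tau \cap \mathcal{H}_\omega$ forces $a_j \neq 0$ and $b_k \neq 0$ for all $j,k$, so all logarithms occurring below are finite and the rearrangements are legitimate.

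First I would amalgamate the two entropies into one sum over the joint distribution $p_{jk} \coloneqq |a_j|^2|b_k|^2$. Inserting $\sum_k |b_k|^2 = 1$ into the $\tau$-term and $\sum_j |a_j|^2 = 1$ into the $\omega$-term,
\begin{align*}
S_\tau(h) + S_\omega(h) = -\sum_{j,k} |a_j|^2|b_k|^2\log\!\big(|a_j|^2|b_k|^2\big) = \sum_{j,k} p_{jk}\log\frac{1}{p_{jk}}.
\end{align*}
Because $(p_{jk})_{j,k}$ is a probability vector, $\log(1/p_{jk}) \ge -\log\big(\max_{j',k'} p_{j'k'}\big)$ for every pair $(j,k)$; multiplying by $p_{jk}$ and summing gives the elementary entropy estimate
\begin{align*}
S_\tau(h) + S_\omega(h) \ge -\log\!\Big(\max_{1\le j,k\le n}|a_j|^2|b_k|^2\Big) = -2\log\!\Big(\max_{1\le j,k\le n}|a_j|\,|b_k|\Big).
\end{align*}

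The crux is the last step: bounding $\max_{j,k}|a_j|\,|b_k| = \max_{j,k}|\langle \tau_j, h\rangle\langle h, \omega_k\rangle|$ by $\tfrac{1+c}{2}$. Here I would invoke the Buzano inequality in $\mathcal{H}$, namely $|\langle x, e\rangle\langle e, y\rangle| \le \tfrac12\big(\|x\|\,\|y\| + |\langle x, y\rangle|\big)$ for any unit vector $e$, with the choice $x = \tau_j$, $y = \omega_k$, $e = h$. This yields $|a_j|\,|b_k| \le \tfrac12\big(1 + |\langle \tau_j, \omega_k\rangle|\big) \le \tfrac{1+c}{2}$ for all $j,k$; substituting into the previous display together with monotonicity of $\log$ produces $S_\tau(h) + S_\omega(h) \ge -2\log\big(\tfrac{1+c}{2}\big)$, which is precisely \eqref{DEUTSCHUNC}. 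I do not anticipate a serious obstacle beyond recognizing that Buzano is the right tool to pass from $|\langle h,\tau_j\rangle\langle\omega_k,h\rangle|$ to $\tfrac12(1+|\langle\tau_j,\omega_k\rangle|)$; the only point requiring mild care is that every manipulation (well-definedness of the entropies, splitting the logarithm, and the pointwise bound on $\log(1/p_{jk})$) genuinely uses $h \in \mathcal{H}_\tau \cap \mathcal{H}_\omega$, and that the argument invokes nothing about $\mathcal{H}$ beyond finitely many nonzero Fourier coefficients — which is exactly what makes the modular generalization pursued in the paper feasible.
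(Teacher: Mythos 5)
Your proof is correct and is essentially the same argument the paper relies on: the paper cites Theorem \ref{DU} and points to the simple Buzano-based proof (Corollary 1 of \cite{KRISHNA}), and its own proof of the modular version proceeds exactly as you do — amalgamate the two entropies into the joint distribution $|\langle h,\tau_j\rangle|^2|\langle h,\omega_k\rangle|^2$ using Parseval, then control each product via the Buzano inequality with the unit vector $h$ in the middle. Your only cosmetic difference is passing through the intermediate bound by $-\log$ of the maximal joint weight before invoking Buzano, whereas the paper bounds each term inside the logarithm directly; the content is identical.
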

The inequality   (\ref{DEUTSCHUNC}) is recently derived for Banach spaces \cite{KRISHNA}.  It is observed very recently that using Buzano inequality (see \cite{BUZANO, FUJIIKUBO, STEELE})  one can provide a simple proof of Theorem \ref{DU} (see Corollary 1 in \cite{KRISHNA}). 
As Hilbert C*-modules became more important in noncommutative geometry, we are mainly motivated from the following problem. What is the modular version of Theorem \ref{DU}? Hilbert C*-modules are first introduced by Kaplansky \cite{KAPLANSKY} for modules over commutative C*-algebras and later developed for modules over arbitrary C*-algebras by Paschke  \cite{PASCHKE} and Rieffel \cite{RIEFFEL}.
\begin{definition}\cite{KAPLANSKY, PASCHKE, RIEFFEL}
	Let $\mathcal{A}$ be a  unital C*-algebra. A left module 	 $\mathcal{E}$  over $\mathcal{A}$ is said to be a (left) Hilbert C*-module if there exists a  map $ \langle \cdot, \cdot \rangle: \mathcal{E}\times \mathcal{E} \to \mathcal{A}$ such that the following hold. 
	\begin{enumerate}[\upshape(i)]
		\item $\langle x, x \rangle  \geq 0$, $\forall x \in \mathcal{E}$. If $x \in  \mathcal{E}$ satisfies $\langle x, x \rangle  =0 $, then $x=0$.
		\item $\langle x+y, z \rangle  =\langle x, z \rangle+\langle y, z \rangle$, $\forall x,y,z \in \mathcal{E}$.
		\item  $\langle ax, y \rangle  =a\langle x, y \rangle$, $\forall x,y \in \mathcal{E}$, $\forall a \in \mathcal{A}$.
		\item $\langle x, y \rangle=\langle y,x \rangle^*$, $\forall x,y \in \mathcal{E}$.
		\item $\mathcal{E}$ is complete w.r.t. the norm $\|x\|\coloneqq \sqrt{\|\langle x, x \rangle\|}$,  $\forall x \in \mathcal{E}$.
	\end{enumerate}
\end{definition}
Our prime tool to derive modular Deutsch uncertainty  is the following modular Buzano inequality by Khosravi, Drnov\v{s}ek,  and Moslehian \cite{KHOSRAVIDRNOVSEKMOSLEHIAN}.
\begin{theorem}\cite{KHOSRAVIDRNOVSEKMOSLEHIAN} (Modular Buzano Inequality)\label{MB}
If $\mathcal{E}$ is a Hilbert C*-module  over a  unital C*-algebra $\mathcal{A}$, then 
\begin{align*}
		\|\langle x, z \rangle \langle z,y \rangle\|\leq \frac{1}{2}\left(\|x\|\|y\|+\|\langle x, y \rangle\|\right), \quad \forall x,y,z \in \mathcal{E}, \langle z, z\rangle =1.
\end{align*}	
\end{theorem}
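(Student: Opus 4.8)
The plan is to transplant the standard Hilbert-space proof of the Buzano inequality to the module setting, using the rank-one module projection onto $z$ in place of the scalar projection $x \mapsto \langle x,z\rangle z$ and the C*-norm in place of the modulus; I expect this is essentially the argument of \cite{KHOSRAVIDRNOVSEKMOSLEHIAN}. Throughout I would work inside $\mathcal{L}(\mathcal{E})$, the unital C*-algebra of adjointable operators on $\mathcal{E}$, and use the module Cauchy--Schwarz inequality $\|\langle u,v\rangle\|\le\|u\|\,\|v\|$, which holds in every Hilbert C*-module.

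First I would fix $z\in\mathcal{E}$ with $\langle z,z\rangle=1$ and define $Px\coloneqq\langle x,z\rangle z$ for $x\in\mathcal{E}$. A short computation shows that $P$ is $\mathcal{A}$-linear, that $\langle Px,y\rangle=\langle x,z\rangle\langle z,y\rangle=\langle x,Py\rangle$ for all $x,y\in\mathcal{E}$ (so $P$ is adjointable with $P^*=P$), and that $P^2x=\langle x,z\rangle\langle z,z\rangle z=\langle x,z\rangle z=Px$, where the hypothesis $\langle z,z\rangle=1$ is used in the last two equalities. Hence $P$ is a projection in $\mathcal{L}(\mathcal{E})$, so $U\coloneqq 2P-I$ is self-adjoint and $U^2=4P^2-4P+I=I$; thus $U$ is a self-adjoint unitary in $\mathcal{L}(\mathcal{E})$, and in particular $\|U\|=1$.

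Next I would rewrite the left-hand side as $\langle x,z\rangle\langle z,y\rangle=\langle Px,y\rangle=\frac{1}{2}\big(\langle Ux,y\rangle+\langle x,y\rangle\big)$, take C*-norms, and apply the triangle inequality to obtain $\|\langle x,z\rangle\langle z,y\rangle\|\le\frac{1}{2}\big(\|\langle Ux,y\rangle\|+\|\langle x,y\rangle\|\big)$. Finally, the module Cauchy--Schwarz inequality together with $\|Ux\|\le\|U\|\,\|x\|=\|x\|$ gives $\|\langle Ux,y\rangle\|\le\|Ux\|\,\|y\|\le\|x\|\,\|y\|$, which is exactly the asserted bound.

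I do not anticipate a genuine obstacle; the proof is short once the classical trick is recalled. The points that need care are conceptual rather than computational: one must use $\langle z,z\rangle=1$, and not merely $\|z\|=1$ (which does not force $P^2=P$ in a C*-module), to make $P$ a projection; and one must invoke the standard facts that an adjointable map on a Hilbert C*-module is automatically bounded and that $\mathcal{L}(\mathcal{E})$ is a C*-algebra, so that a self-adjoint unitary there has norm $1$. If one prefers to avoid $\mathcal{L}(\mathcal{E})$, then $\|Ux\|\le\|x\|$ can be verified directly by expanding $\langle Ux,Ux\rangle$ and using $P^*=P=P^2$ to get $\langle Ux,Ux\rangle=\langle x,x\rangle$; but the operator-algebra formulation is cleaner.
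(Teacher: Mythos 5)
Your argument is correct, but there is nothing in this paper to compare it with: Theorem \ref{MB} is quoted here as a known result from Khosravi, Drnov\v{s}ek and Moslehian, and no proof is given in the text, so your write-up is in effect supplying a proof the paper deliberately omits. Your route is the classical Fujii--Kubo reflection trick transplanted to modules, and all the module-specific points check out: $Px=\langle x,z\rangle z$ is adjointable with $\langle Px,y\rangle=\langle x,Py\rangle=\langle x,z\rangle\langle z,y\rangle$, the hypothesis $\langle z,z\rangle=1$ (and not merely $\|z\|=1$) gives $P^2=P$, hence $U=2P-I$ is a self-adjoint unitary in $\mathcal{L}(\mathcal{E})$ with $\langle Ux,Ux\rangle=\langle x,x\rangle$, and the identity $\langle x,z\rangle\langle z,y\rangle=\tfrac{1}{2}\bigl(\langle Ux,y\rangle+\langle x,y\rangle\bigr)$ together with the triangle inequality and the module Cauchy--Schwarz inequality yields the bound. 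The cited source (whose title advertises a commutator approach) reaches the inequality by a different mechanism, via norm estimates of commutator type rather than the projection/reflection decomposition, so your proof is a more elementary and self-contained alternative, at the cost of not yielding the commutator refinements of that paper; it also makes transparent exactly where the unit inner product hypothesis enters, which is the same hypothesis the present paper imposes on its Parseval frames.
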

 In this paper we derive Theorem \ref{DU} for Hilbert C*-modules  over commutative unital C*-algebras.

\section{Modular Deutsch Entropic Uncertainty Principle}
	We begin by  recalling  the definition of frames for Hilbert C*-modules.
	\begin{definition}\cite{FRANKLARSON}
	Let 	 $\mathcal{E}$ be a Hilbert C*-module over a  C*-algebra $\mathcal{A}$. A collection $\{\tau_j\}_{j=1}^\infty $ in  $\mathcal{E}$ is said to be a (modular) \textbf{Parseval frame} for  $\mathcal{E}$ if 
	\begin{align*}
		 x  =\sum_{j=1}^\infty  \langle x, \tau_j \rangle \tau_j,   \quad \forall x \in \mathcal{E}.
	\end{align*}
\end{definition}
	A collection  $\{\tau_j\}_{j=1}^\infty $ in a Hilbert C*-module $\mathcal{E}$  over unital C*-algebra $\mathcal{A}$ with identity $1$  is said to have \textbf{unit inner product} if
	\begin{align*}
		\langle \tau_j, \tau_j \rangle =1, \quad \forall  j \in \mathbb{N}.
	\end{align*}
In analogy with Equation (\ref{D}), given a unit inner product Parseval frame $\{\tau_j\}_{j=1}^\infty $ for  $\mathcal{E}$, we define  \textbf{modular Shannon entropy}  at a point $x\in \mathcal{E}_\tau$ is defined as 
\begin{align}
	S_\tau (x)\coloneqq - \sum_{j=1}^{\infty}  \langle x, \tau_j\rangle \langle \tau_j, x \rangle \log ( \langle x, \tau_j\rangle \langle  \tau_j,x \rangle)
\end{align}
where $\mathcal{E}_\tau\coloneqq \{x \in \mathcal{E}: \langle x , x \rangle=1,  \langle x , \tau_j \rangle \neq 0, \forall j \in \mathbb{N}\}$.
\begin{theorem} (\textbf{Modular Deutsch Entropic Uncertainty Principle})
	Let 	 $\mathcal{E}$ be a Hilbert C*-module over a commutative unital C*-algebra $\mathcal{A}$. 	Let $\{\tau_j\}_{j=1}^\infty$,  $\{\omega_k\}_{k=1}^\infty$ be two Parseval frames  for  $\mathcal{E}$. Then 
		\begin{align*}
		S_\tau (x)+S_\omega (x)\geq -2 \log \left(\frac{1+\displaystyle \sup_{j,k \in \mathbb{N}}\|\langle\tau_j , \omega_k\rangle\|}{2}\right), \quad \forall x \in \mathcal{E}_\tau\cap \mathcal{E}_\omega.
	\end{align*}
\end{theorem}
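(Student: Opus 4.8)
The plan is to reduce the statement to the classical Deutsch inequality fiberwise, using the Gelfand representation of the commutative C*-algebra. Since $\mathcal{A}$ is commutative and unital, write $\mathcal{A} = C(X)$ with $X$ compact Hausdorff; then the order on $\mathcal{A}$ is pointwise, the continuous functional calculus (in particular $s \mapsto s\log s$, extended by $0$ at $0$) acts by evaluation, and the $\mathcal{A}$-norm is the supremum norm. Denote the fixed element of $\mathcal{E}_\tau\cap\mathcal{E}_\omega$ by $v$ (so $\langle v,v\rangle = 1$), and put $a_j := \langle v,\tau_j\rangle$, $b_k := \langle v,\omega_k\rangle$, $p_j := a_ja_j^* = \langle v,\tau_j\rangle\langle\tau_j,v\rangle \ge 0$, $q_k := b_kb_k^* \ge 0$. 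As the frames are unit inner product Parseval frames (as needed for the entropies to be defined), $\|\tau_j\| = \|\omega_k\| = 1$; and from the reconstruction identity $v = \sum_j\langle v,\tau_j\rangle\tau_j$ together with continuity of $\langle\cdot,\cdot\rangle$ one gets $\sum_j p_j = \langle v,v\rangle = 1$ and likewise $\sum_k q_k = 1$, the series converging in $\mathcal{A}$-norm and hence pointwise on $X$. Thus for each $t\in X$, $(p_j(t))_j$ and $(q_k(t))_k$ are genuine probability distributions.

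First I would record the pointwise identifications $S_\tau(v)(t) = -\sum_j p_j(t)\log p_j(t) = H(p(t))$ and $S_\omega(v)(t) = H(q(t))$, where $H$ is the scalar Shannon entropy; this uses that the defining series converge in sup norm, hence uniformly, hence pointwise, and that $s\mapsto s\log s$ is continuous on $[0,\infty)$. Then I apply the elementary inequality: for any probability vector $(r_i)$ with $M := \sup_i r_i$ one has $H(r) = \sum_i r_i(-\log r_i) \ge (-\log M)\sum_i r_i = -\log M$ (each $r_i \le M$ and $-\log$ is decreasing; the $r_i=0$ terms contribute $0$ consistently). This yields, for every $t\in X$,
\[
	S_\tau(v)(t) + S_\omega(v)(t) \ge -\log\Big(\sup_j p_j(t)\Big) - \log\Big(\sup_k q_k(t)\Big) = -\log\Big(\sup_j p_j(t)\cdot\sup_k q_k(t)\Big).
\]

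The core step is to control $\sup_j p_j(t)\cdot\sup_k q_k(t)$ by the modular Buzano inequality. Fixing $j,k$ and applying Theorem \ref{MB} with $x=\tau_j$, $y=\omega_k$, $z=v$ gives $\|\langle\tau_j,v\rangle\langle v,\omega_k\rangle\| \le \tfrac12(\|\tau_j\|\|\omega_k\| + \|\langle\tau_j,\omega_k\rangle\|) \le \tfrac{1+c}{2}$, where $c := \sup_{j,k}\|\langle\tau_j,\omega_k\rangle\|$ (note $c \le 1$ by Cauchy--Schwarz and $\|\omega_k\|=1$, so $\tfrac{1+c}{2}>0$ and the right-hand logarithm is meaningful, the bound being nonnegative). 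Since $\langle\tau_j,v\rangle\langle v,\omega_k\rangle = a_j^*b_k$ and the $\mathcal{A}$-norm is the sup norm, the value of $a_j^*b_k$ at any $t$ has modulus $|a_j(t)|\,|b_k(t)| = \sqrt{p_j(t)}\sqrt{q_k(t)} \le \tfrac{1+c}{2}$. Because $\sum_j p_j(t)=1$ forces $p_j(t)\to 0$, the supremum $\sup_j p_j(t)$ is attained; picking indices attaining the two suprema gives $\sup_j p_j(t)\cdot\sup_k q_k(t) \le \big(\tfrac{1+c}{2}\big)^2$. Substituting into the displayed inequality yields $S_\tau(v)(t) + S_\omega(v)(t) \ge -2\log\big(\tfrac{1+c}{2}\big)$ for all $t\in X$, and since the order on $\mathcal{A}$ is pointwise and $-2\log\big(\tfrac{1+c}{2}\big)$ is the corresponding constant element, the claimed modular inequality follows.

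The main obstacle, rather than any hard inequality (Buzano does the heavy lifting), is the careful passage through the Gelfand picture: one must verify that the norm-convergent series defining $S_\tau(v)$, $S_\omega(v)$ and $\langle v,v\rangle$ transfer to honest pointwise statements on $X$, that the functional-calculus definition of the $\mathcal{A}$-valued entropy coincides fiberwise with the scalar Shannon entropy, and that the supremum-norm bound from Theorem \ref{MB} can be read off at each individual point. Commutativity of $\mathcal{A}$ is used precisely to enable this fiberwise reduction, after which the classical Deutsch argument runs verbatim in each fiber.
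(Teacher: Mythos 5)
Your proof is correct (modulo the same implicit assumption the paper also makes, namely that the series defining $S_\tau(x)$, $S_\omega(x)$ converge in $\mathcal{A}$), but it takes a genuinely different route from the paper. The paper argues intrinsically inside the C*-algebra: it doubles the sums by inserting $\sum_k \langle x,\omega_k\rangle\langle\omega_k,x\rangle=1$ and $\sum_j \langle x,\tau_j\rangle\langle\tau_j,x\rangle=1$, uses commutativity to merge the two logarithms into $\log\bigl(\langle\tau_j,x\rangle\langle x,\omega_k\rangle\langle\omega_k,x\rangle\langle x,\tau_j\rangle\bigr)$, bounds that operator by a scalar via Theorem \ref{MB} (applied exactly as you apply it, with $z=x$) together with operator monotonicity of the logarithm, and then resums using the Parseval identity. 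You instead pass through the Gelfand transform $\mathcal{A}\cong C(X)$ and run the classical argument in each fiber, replacing the paper's doubling trick by the min-entropy bound $H(r)\geq-\log\sup_i r_i$ and reading the sup-norm Buzano estimate $\sqrt{p_j(t)q_k(t)}\leq\tfrac{1+c}{2}$ pointwise. What your route buys: it avoids operator monotonicity of $\log$ and the rearrangement of an operator-valued double series (which the paper manipulates formally), it makes transparent how the entropy terms are to be interpreted when $\langle x,\tau_j\rangle\langle\tau_j,x\rangle$ is not invertible (via the functional calculus of $s\mapsto s\log s$), and it even yields the stronger intermediate bound $-\log\sup_j p_j(t)-\log\sup_k q_k(t)$ in each fiber. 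What the paper's route buys: it stays module-theoretic, never invoking the Gelfand representation explicitly, and directly mirrors the Hilbert-space Buzano proof of Deutsch's inequality, so it reads as a verbatim operator-valued analogue. Both proofs use commutativity essentially (you for the fiberwise reduction, the paper to add logarithms), both use $\|\tau_j\|\,\|\omega_k\|\leq 1$, and both hinge on the same application of the modular Buzano inequality; your attainment remark for $\sup_j p_j(t)$ is unnecessary, since $\sup_j p_j(t)\cdot\sup_k q_k(t)=\sup_{j,k}p_j(t)q_k(t)\leq\bigl(\tfrac{1+c}{2}\bigr)^2$ follows directly, but this is harmless.
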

\begin{proof}
	Let $x \in \mathcal{E}_\tau \cap \mathcal{E}_\omega$. Using the Parseval frame property,  the commutativity of C*-algebra,  Theorem \ref{MB} and the result  that  `function logarithm  is operator monotone' \cite{CHANSANGIAM}, we get 
	\begin{align*}
	S_\tau (x)+S_\omega (x)&=- \sum_{j=1}^{\infty}  \langle x, \tau_j\rangle \langle \tau_j, x \rangle \log ( \langle x, \tau_j\rangle \langle  \tau_j,x \rangle)- \sum_{k=1}^{\infty}  \langle x, \omega_k\rangle \langle \omega_k,  x \rangle \log ( \langle x, \omega_k\rangle \langle  \omega_k,x \rangle)\\
	&=-\sum_{j=1}^{\infty}\sum_{k=1}^{\infty} \langle x, \tau_j\rangle \langle \tau_j, x \rangle\langle x, \omega_k\rangle \langle \omega_k,  x \rangle \left[\log ( \langle x, \tau_j\rangle \langle  \tau_j,x \rangle)+\log ( \langle x, \omega_k\rangle \langle  \omega_k,x \rangle)\right]\\
	&=-\sum_{j=1}^{\infty}\sum_{k=1}^{\infty} \langle x, \tau_j\rangle \langle \tau_j, x \rangle\langle x, \omega_k\rangle \langle \omega_k,  x \rangle \log ( \langle x, \tau_j\rangle \langle  \tau_j,x \rangle \langle x, \omega_k\rangle \langle  \omega_k,x \rangle)\\
	&=-\sum_{j=1}^{\infty}\sum_{k=1}^{\infty} \langle x, \tau_j\rangle \langle \tau_j, x \rangle\langle x, \omega_k\rangle \langle \omega_k,  x \rangle \log ( \langle  \tau_j,x \rangle \langle x, \omega_k\rangle \langle  \omega_k,x \rangle \langle  x, \tau_j \rangle)\\
	&\geq-\sum_{j=1}^{\infty}\sum_{k=1}^{\infty} \langle x, \tau_j\rangle \langle \tau_j, x \rangle\langle x, \omega_k\rangle \langle \omega_k,  x \rangle \log \left( \frac{[\|\tau_j\|\|\omega_k\|+\|\langle \tau_j, \omega_k \rangle \|][\|\omega_k\|\|\tau_j\|+\|\langle  \omega_k, \tau_j \rangle \|]}{4}\right)\\
	&=-\sum_{j=1}^{\infty}\sum_{k=1}^{\infty} \langle x, \tau_j\rangle \langle \tau_j, x \rangle\langle x, \omega_k\rangle \langle \omega_k,  x \rangle \log \left( \frac{(\|\tau_j\|\|\omega_k\|+\|\langle \tau_j, \omega_k \rangle \|)^2}{4}\right)\\
	&=-2\sum_{j=1}^{\infty}\sum_{k=1}^{\infty} \langle x, \tau_j\rangle \langle \tau_j, x \rangle\langle x, \omega_k\rangle \langle \omega_k,  x \rangle \log \left( \frac{\|\tau_j\|\|\omega_k\|+\|\langle \tau_j, \omega_k \rangle \|}{2}\right)\\
	&\geq -2\sum_{j=1}^{\infty}\sum_{k=1}^{\infty} \langle x, \tau_j\rangle \langle \tau_j, x \rangle\langle x, \omega_k\rangle \langle \omega_k,  x \rangle \log \left( \frac{1+\|\langle \tau_j, \omega_k \rangle \|}{2}\right)\\
		&\geq -2\sum_{j=1}^{\infty}\sum_{k=1}^{\infty} \langle x, \tau_j\rangle \langle \tau_j, x \rangle\langle x, \omega_k\rangle \langle \omega_k,  x \rangle \log \left( \frac{1+\sup_{j,k \in \mathbb{N}}\|\langle \tau_j, \omega_k \rangle \|}{2}\right)\\
		&=-2\log \left( \frac{1+\sup_{j,k \in \mathbb{N}}\|\langle \tau_j, \omega_k \rangle \|}{2}\right)\sum_{j=1}^{\infty}\sum_{k=1}^{\infty} \langle x, \tau_j\rangle \langle \tau_j, x \rangle\langle x, \omega_k\rangle \langle \omega_k,  x \rangle\\
		&=-2\log \left( \frac{1+\sup_{j,k \in \mathbb{N}}\|\langle \tau_j, \omega_k \rangle \|}{2}\right) \langle x,  x \rangle\langle x,  x \rangle\\
		&=-2\log \left( \frac{1+\sup_{j,k \in \mathbb{N}}\|\langle \tau_j, \omega_k \rangle \|}{2}\right).
	\end{align*}
\end{proof}
In 1988, Maassen  and Uffink (motivated from the conjecture  by Kraus made in 1987 \cite{KRAUS}) improved Deutsch entropic uncertainty principle.
\begin{theorem}\cite{MAASSENUFFINK} 
	(\textbf{Maassen-Uffink Entropic Uncertainty Principle})  \label{MU}
	Let $\{\tau_j\}_{j=1}^n$,  $\{\omega_k\}_{k=1}^n$ be two orthonormal bases for a  finite dimensional Hilbert space $\mathcal{H}$. Then 
	\begin{align*}
		S_\tau (h)+S_\omega (h)\geq -2 \log \left(\displaystyle \max_{1\leq j, k \leq n}|\langle\tau_j , \omega_k\rangle|\right), \quad \forall h \in \mathcal{H}_\tau \cap \mathcal{H}_\omega.
	\end{align*}	
\end{theorem}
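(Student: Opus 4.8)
The plan is to use the classical interpolation argument of Maassen and Uffink, which is genuinely stronger than the Buzano route of the previous theorem and cannot be recovered from it (for $c\in[0,1]$ one has $-2\log c \ge -2\log\tfrac{1+c}{2}$, so the target bound is strictly larger). First I would reduce the statement to a norm inequality for the change-of-basis matrix. Fix $h \in \mathcal{H}_\tau \cap \mathcal{H}_\omega$, set $a_j = \langle h,\tau_j\rangle$ and $b_k = \langle h,\omega_k\rangle$, and write $U_{kj} = \langle \tau_j,\omega_k\rangle$. Expanding $h = \sum_j a_j\tau_j$ gives $b_k = \sum_j U_{kj}a_j$, where $U$ is unitary (both families are orthonormal bases) and $c = \max_{j,k}|U_{kj}| = \max_{j,k}|\langle\tau_j,\omega_k\rangle|$. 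Writing $P_j = |a_j|^2$ and $Q_k = |b_k|^2$ for the two outcome distributions, the quantities in the statement are $S_\tau(h) = -\sum_j P_j\log P_j$ and $S_\omega(h) = -\sum_k Q_k\log Q_k$.

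The analytic core is an interpolation estimate for $U$. The two endpoint bounds are immediate: $\|Ua\|_\infty \le c\,\|a\|_1$ (each entry of $U$ has modulus at most $c$) and $\|Ua\|_2 = \|a\|_2$ (unitarity). Applying the Riesz--Thorin interpolation theorem to the pair $(\ell^1\!\to\!\ell^\infty,\ \ell^2\!\to\!\ell^2)$ yields, for every conjugate pair $1/p + 1/q = 1$ with $q \ge 2$,
\[
\|Ua\|_q \le c^{\,1-2/q}\,\|a\|_p.
\]
Substituting $|a_j|^2 = P_j$ and $|b_k|^2 = Q_k$ and taking logarithms turns this into an inequality $G(r)\ge 0$ valid for all $r\ge 1$, where, writing $r = q/2$ and $s = p/2$ (so that $1/s + 1/r = 2$, equivalently $s = r/(2r-1)$), one checks the clean closed form
\[
G(r) = \frac{1}{2s}\log\sum_j P_j^{\,s} \;+\; \Big(1-\tfrac1r\Big)\log c \;-\; \frac{1}{2r}\log\sum_k Q_k^{\,r}.
\]

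The final step is a one-sided differentiation at the endpoint $r = 1$. Since $\sum_j P_j = \sum_k Q_k = 1$, all three terms vanish at $r=1$, so $G(1) = 0$; as the interpolation inequality forces $G(r)\ge 0$ for all $r\ge 1$, the right-hand derivative satisfies $G'(1^+)\ge 0$. Differentiating the closed form above --- using $\frac{d}{ds}\log\sum_j P_j^{\,s}\big|_{s=1} = \sum_j P_j\log P_j = -S_\tau(h)$, the analogous identity for $Q$, and $\frac{ds}{dr}\big|_{r=1} = -1$ --- collapses $G'(1^+)\ge 0$ to exactly $\tfrac12 S_\tau(h) + \tfrac12 S_\omega(h) + \log c \ge 0$, which is the asserted bound $S_\tau(h)+S_\omega(h)\ge -2\log c$.

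I expect the main obstacle to be the rigorous execution of this limiting step rather than the interpolation itself. One must justify that $G$ is differentiable from the right at $r=1$: on a finite-dimensional space the partition functions $\sum_j P_j^{\,s}$ and $\sum_k Q_k^{\,r}$ are real-analytic in the exponent and bounded away from $0$, so the Rényi entropies $H_r(Q)=\frac{1}{1-r}\log\sum_k Q_k^{\,r}$ are smooth in $r$; one must then handle the $0/0$ cancellation in $H_r$ as $r\to 1$ (which is precisely why passing to the partition-function form of $G$ is advantageous, since there the apparent singularity is already removed) and confirm the sign of the resulting one-sided derivative. A secondary point worth recording is that finite-dimensionality together with the exclusion of vanishing amplitudes built into $\mathcal{H}_\tau\cap\mathcal{H}_\omega$ guarantees that every sum is finite and every logarithm is well defined, so no convergence or domain issues intervene.
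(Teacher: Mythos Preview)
Your argument is the classical Maassen--Uffink interpolation proof and is carried out correctly: the endpoint bounds, the Riesz--Thorin step yielding $\|Ua\|_q\le c^{1-2/q}\|a\|_p$, and the differentiation of $G$ at $r=1$ all check (including the chain-rule sign from $ds/dr|_{r=1}=-1$). There is nothing to compare against in the paper itself, because the paper does \emph{not} supply a proof of this theorem; it merely quotes the result with a citation and remarks immediately afterwards that ``Proofs of Theorems \ref{MU} and \ref{RT} use Riesz--Thorin interpolation (RTI).'' Your proposal is thus exactly the route the paper points to but omits, so it is consistent with the paper's treatment rather than an alternative to it.
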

In 2013, Ricaud  and Torr\'{e}sani \cite{RICAUDTORRESANI} showed that orthonormal bases in Theorem \ref{MU} can be improved to Parseval frames. 
\begin{theorem}\cite{RICAUDTORRESANI} 
	(\textbf{Ricaud-Torr\'{e}sani Entropic Uncertainty Principle})  \label{RT}
	Let $\{\tau_j\}_{j=1}^n$,  $\{\omega_k\}_{k=1}^m$ be two Parseval frames  for a  finite dimensional Hilbert space $\mathcal{H}$. Then 
	\begin{align*}
		 S_\tau (h)+S_\omega (h)\geq -2 \log \left(\displaystyle \max_{1\leq j \leq n, 1\leq k \leq m}|\langle\tau_j , \omega_k\rangle|\right), \quad \forall h \in \mathcal{H}_\tau \cap \mathcal{H}_\omega.
	\end{align*}	
\end{theorem}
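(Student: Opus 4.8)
The plan is to follow the Maassen–Uffink interpolation scheme, which is precisely what Ricaud and Torr\'esani adapt from orthonormal bases to Parseval frames. Write $c_j \coloneqq \langle h, \tau_j\rangle$ and $d_k \coloneqq \langle h, \omega_k\rangle$, and let $T_\tau h = (c_j)_j$, $T_\omega h = (d_k)_k$ be the analysis operators, with synthesis operators $T_\tau^\ast, T_\omega^\ast$. Since both families are Parseval frames, the reconstruction identity gives $\|T_\tau h\|_2^2 = \sum_j\langle h,\tau_j\rangle\langle\tau_j,h\rangle = \langle h,h\rangle$, so $T_\tau, T_\omega$ are isometries (hence $\|c\|_2=\|d\|_2=\|h\|=1$) and $T_\tau^\ast T_\tau = I$. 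Consequently $d = T_\omega h = T_\omega T_\tau^\ast T_\tau h = M c$, where $M \coloneqq T_\omega T_\tau^\ast$ is the cross-Gram matrix with entries $M_{kj} = \langle \tau_j, \omega_k\rangle$. First I would record two operator-norm bounds for $M$: on the one hand $\|M\|_{\ell^1\to\ell^\infty} = \max_{j,k}|M_{kj}| = \max_{j,k}|\langle\tau_j,\omega_k\rangle| =: \mu$; on the other, since $T_\omega$ is an isometry and $T_\tau^\ast$ a contraction, $\|M\|_{\ell^2\to\ell^2}\leq 1$. This is the step where the Parseval (rather than orthonormal-basis) hypothesis is used, and it is exactly what lets the classical argument go through verbatim.

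Next I would interpolate. By the Riesz–Thorin theorem applied to $M$ between the endpoints $(\ell^1\to\ell^\infty)$ and $(\ell^2\to\ell^2)$, for every $p\in[1,2]$ with conjugate exponent $p'$ one obtains $\|M\|_{\ell^p\to\ell^{p'}}\leq \mu^{\,2/p-1}$, and therefore
\begin{align*}
\|d\|_{p'} \leq \mu^{\,2/p-1}\,\|c\|_p, \qquad 1 \leq p \leq 2.
\end{align*}
Taking logarithms, I set $f(p) \coloneqq \log\|d\|_{p'} - \log\|c\|_p - \big(\tfrac{2}{p}-1\big)\log\mu$, so that $f(p)\leq 0$ throughout $[1,2]$. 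The crucial observation is that at the endpoint $p=2$ one has $p'=2$ and $\|c\|_2=\|d\|_2=1$, whence $f(2)=0$. Thus $f$ attains its maximum over $[1,2]$ at the right endpoint, which forces the one-sided derivative $f'(2^-)\geq 0$.

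The main work — and the only genuinely delicate point — is this endpoint differentiation, since both $\log\|c\|_p$ and the prefactor $\tfrac{2}{p}-1$ are degenerate at $p=2$ and the Shannon entropies emerge only at first order. A direct computation using $\frac{d}{dp}\log\|c\|_p = -\frac{1}{p^2}\log\sum_j|c_j|^p + \frac{1}{p}\frac{\sum_j|c_j|^p\log|c_j|}{\sum_j|c_j|^p}$ together with $\|c\|_2=1$ gives $\frac{d}{dp}\log\|c\|_p\big|_{p=2} = \tfrac14\sum_j|c_j|^2\log|c_j|^2 = -\tfrac14 S_\tau(h)$; likewise, via the chain rule with $\frac{dp'}{dp}\big|_{p=2}=-1$, one gets $\frac{d}{dp}\log\|d\|_{p'}\big|_{p=2} = \tfrac14 S_\omega(h)$, while $\frac{d}{dp}\big[(\tfrac2p-1)\log\mu\big]\big|_{p=2} = -\tfrac12\log\mu$. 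Substituting into $f'(2^-)\geq 0$ yields $\tfrac14\big(S_\tau(h)+S_\omega(h)\big)+\tfrac12\log\mu \geq 0$, that is $S_\tau(h)+S_\omega(h)\geq -2\log\mu = -2\log\big(\max_{j,k}|\langle\tau_j,\omega_k\rangle|\big)$, which is the claimed bound. I expect the Riesz–Thorin step to be routine once the two endpoint norms are identified, so the care lies entirely in rigorously justifying the boundary-derivative argument (smoothness of $p\mapsto\|c\|_p$ on $(1,2]$, finiteness of $\mu\in(0,1]$, and that $h\in\mathcal{E}_\tau\cap\mathcal{E}_\omega$ keeps all $c_j,d_k$ nonzero so the logarithmic derivatives are well defined).
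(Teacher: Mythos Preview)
The paper does not supply its own proof of this theorem; it merely quotes the result from Ricaud--Torr\'esani and remarks that its proof (like that of Maassen--Uffink) proceeds via Riesz--Thorin interpolation. Your proposal carries out precisely this interpolation-and-endpoint-differentiation argument, and the computations are correct, so it is exactly the approach the paper alludes to (the slip $\mathcal{E}_\tau\cap\mathcal{E}_\omega$ at the end should of course read $\mathcal{H}_\tau\cap\mathcal{H}_\omega$).
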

Proofs of  Theorems \ref{MU} and  \ref{RT} use Riesz-Thorin interpolation (RTI). To the best of author's knowledge, RTI  does not exists for abstract Hilbert C*-modules. Therefore we end by formulating the following conjecture. 
\begin{conjecture}
	(\textbf{Modular Kraus Entropic Conjecture}) 
		Let 	 $\mathcal{E}$ be a Hilbert C*-module over a commutative unital C*-algebra $\mathcal{A}$. 	Let $\{\tau_j\}_{j=1}^\infty$,  $\{\omega_k\}_{k=1}^\infty$ be two Parseval frames  for  $\mathcal{E}$. Then 
\begin{align*}
	 S_\tau (x)+S_\omega (x)\geq -2 \log \left(\displaystyle \sup_{j, k \in \mathbb{N}}\|\langle\tau_j , \omega_k\rangle\|\right), \quad \forall x \in \mathcal{E}_\tau \cap \mathcal{E}_\omega.
\end{align*}		
\end{conjecture}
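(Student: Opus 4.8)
The plan is to bypass the stated obstruction --- the absence of a Riesz--Thorin interpolation theorem for abstract Hilbert C*-modules --- by \emph{fibering} the problem over the Gelfand spectrum of $\mathcal{A}$, reducing the modular inequality to a family of purely classical inequalities, one for each point of the spectrum, where interpolation (and hence Theorem \ref{RT}) is available. Since $\mathcal{A}$ is commutative and unital, Gelfand duality identifies $\mathcal{A}$ with $C(X)$ for a compact Hausdorff space $X$, the norm being the supremum norm and the order being pointwise. Under this identification every self-adjoint element is a real-valued continuous function, the modular entropies $S_\tau(x),S_\omega(x)$ are elements of $C(X)$, and the claimed inequality is the assertion that the continuous function $S_\tau(x)+S_\omega(x)+2\log\bigl(\sup_{j,k}\|\langle\tau_j,\omega_k\rangle\|\bigr)\cdot 1$ is pointwise nonnegative on $X$. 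It therefore suffices to prove the scalar inequality at each fixed point $t\in X$.

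First I would localize the module at a point $t\in X$. Writing $\phi_t:\mathcal{A}\to\mathbb{C}$ for the character given by evaluation at $t$, I equip $\mathcal{E}$ with the semi-inner product $\langle x,y\rangle_t:=\phi_t(\langle x,y\rangle)=\langle x,y\rangle(t)$, quotient by its null space, and complete to obtain a Hilbert space $\mathcal{E}_t$ with quotient map $q_t:\mathcal{E}\to\mathcal{E}_t$, $x\mapsto x(t)$, satisfying $\langle x(t),y(t)\rangle_{\mathcal{E}_t}=\langle x,y\rangle(t)$. Because $\|x(t)\|_{\mathcal{E}_t}^2=\langle x,x\rangle(t)\le\|\langle x,x\rangle\|=\|x\|^2$, the map $q_t$ is contractive and commutes with norm-convergent sums, and it intertwines the $\mathcal{A}$-action with scalar multiplication through $\phi_t$. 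Applying $q_t$ to the reconstruction formula $x=\sum_j\langle x,\tau_j\rangle\tau_j$ then yields $x(t)=\sum_j\langle x(t),\tau_j(t)\rangle\tau_j(t)$, so $\{\tau_j(t)\}_j$ and likewise $\{\omega_k(t)\}_k$ are Parseval frames for $\mathcal{E}_t$. Moreover $\langle x,x\rangle=1$ gives $\|x(t)\|=1$, and evaluating the modular entropy pointwise --- using that functional calculus in $C(X)$ is pointwise and adopting the convention $0\log 0=0$ --- produces exactly the classical Shannon entropy of the probability vector $\bigl(|\langle x(t),\tau_j(t)\rangle|^2\bigr)_j$; that is, $S_\tau(x)(t)=S_{\tau(t)}(x(t))$, and similarly for $\omega$.

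With the fiber set up, I would invoke Theorem \ref{RT} in $\mathcal{E}_t$ (after extending it, by the continuity of $p\mapsto-p\log p$ up to the boundary of the simplex, to unit vectors some of whose frame coefficients vanish, which may happen at individual points $t$) to obtain
\begin{align*}
S_\tau(x)(t)+S_\omega(x)(t)\ge -2\log\Bigl(\max_{j,k}|\langle\tau_j(t),\omega_k(t)\rangle|\Bigr).
\end{align*}
Since $|\langle\tau_j(t),\omega_k(t)\rangle|=|\langle\tau_j,\omega_k\rangle(t)|\le\sup_s|\langle\tau_j,\omega_k\rangle(s)|=\|\langle\tau_j,\omega_k\rangle\|\le\sup_{j,k}\|\langle\tau_j,\omega_k\rangle\|$ and $-2\log$ is decreasing, the right-hand side is bounded below by $-2\log\bigl(\sup_{j,k}\|\langle\tau_j,\omega_k\rangle\|\bigr)$, a constant independent of $t$. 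As $t\in X$ was arbitrary, this pointwise bound is precisely the desired inequality in the order of $\mathcal{A}$.

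The main obstacle is the appeal to Theorem \ref{RT} in a fiber $\mathcal{E}_t$ that may be infinite dimensional and carries only countable Parseval frames, whereas Theorem \ref{RT} is stated for finite-dimensional spaces with finite frames. Resolving this requires an infinite-dimensional, countable-frame version of the Maassen--Uffink/Ricaud--Torr\'{e}sani estimate; fortunately the underlying Riesz--Thorin interpolation holds verbatim for operators between $\ell^p$ spaces over a countable index set (the cross-Gram operator $G=(\langle\tau_j,\omega_k\rangle)_{j,k}$ has $\|G\|_{2\to 2}\le 1$ and $\|G\|_{1\to\infty}=\sup_{j,k}|\langle\tau_j,\omega_k\rangle|$), so the classical argument transfers once one checks convergence of the relevant entropies, the inequality being trivial whenever either entropy is infinite. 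A secondary technical point is to confirm that the termwise pointwise evaluations assemble into genuine elements of $C(X)$ (continuity and uniform convergence of the entropy series), so that the fiberwise inequalities truly express an inequality between elements of $\mathcal{A}$. Notably, none of these steps reintroduces interpolation at the level of the module: the commutativity of $\mathcal{A}$ lets all interpolation be carried out in the scalar fibers, which is exactly the structure the abstract-module formulation lacked.
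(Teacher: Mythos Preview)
The paper does not prove this statement: it is explicitly formulated as an open conjecture, the author noting that the classical proofs of Theorems~\ref{MU} and~\ref{RT} rely on Riesz--Thorin interpolation, which is unavailable for abstract Hilbert C*-modules. There is therefore no paper proof to compare against; your proposal is an attempt to \emph{resolve} the conjecture, not to reproduce an existing argument.

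That said, your strategy is sound and is precisely the observation the paper is missing. The point you exploit is that commutativity of $\mathcal{A}\cong C(X)$ makes the order, the functional calculus, and hence the inequality itself pointwise over the Gelfand spectrum; localizing $\mathcal{E}$ at each character $\phi_t$ produces genuine Hilbert spaces $\mathcal{E}_t$ in which ordinary interpolation is available, so the obstruction the paper cites evaporates. Your verification that Parseval frames descend to the fibers (via continuity of $q_t$ and $T_\tau^*T_\tau=I$), that $S_\tau(x)(t)$ equals the scalar Shannon entropy of $\bigl(|\langle x(t),\tau_j(t)\rangle|^2\bigr)_j$, and that $|\langle\tau_j(t),\omega_k(t)\rangle|\le\sup_{j,k}\|\langle\tau_j,\omega_k\rangle\|$ are all correct.

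The one substantive ingredient you must supply, as you recognise, is a countable-frame, infinite-dimensional version of Ricaud--Torr\'{e}sani. Your sketch is right: the cross-Gram operator $U=T_\omega T_\tau^*$ has $\|U\|_{\ell^2\to\ell^2}\le 1$ and $\|U\|_{\ell^1\to\ell^\infty}=\sup_{j,k}|\langle\tau_j,\omega_k\rangle|$, Riesz--Thorin on $\ell^p(\mathbb{N})$ interpolates, and Beckner's differentiation at $p=2$ yields the entropy bound; if a fibre entropy diverges, the inequality is vacuous there. The remaining checks (that the entropy series defining $S_\tau(x)$ actually converges in $C(X)$, and that $\log$ of a possibly non-invertible positive element makes sense via the $0\log 0=0$ convention) are issues already latent in the paper's definition of modular Shannon entropy and are not specific to your argument. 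With those details filled in, your fibering argument settles the conjecture over commutative unital $\mathcal{A}$.
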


\section{Acknowledgments}
This research was partially supported by the University of Warsaw Thematic Research Programme ``Quantum Symmetries".

  \bibliographystyle{plain}
 \bibliography{reference.bib}

\end{document}